\documentclass[10pt]{amsart}

\usepackage{lmodern}
\usepackage[T1]{fontenc}

\usepackage{amsmath, amsthm, amscd, amssymb, xcolor}
\definecolor{dblue}{rgb}{0,0,.6}
\usepackage[colorlinks=true, linkcolor=dblue, citecolor=dblue, filecolor = dblue, menucolor = dblue, urlcolor = dblue]{hyperref}
\usepackage{tikz-cd} 

\usepackage[all,cmtip]{xy}

\textwidth	= 161mm
\textheight	= 221mm
\evensidemargin	= 0mm
\oddsidemargin	= 0mm
\voffset	= -15mm

\linespread{1.2}
\pagestyle{plain}

\newcommand{\bbQ}{\mathbb{Q}}
\newcommand{\bbR}{\mathbb{R}}
\newcommand{\bbC}{\mathbb{C}}
\newcommand{\bbH}{\mathbb{H}}

\newcommand{\CC}{\mathcal{C}}

\newcommand{\GG}{\mathcal{G}}

\newcommand{\JJ}{\mathcal{J}}

\newcommand{\frgt}{\mathfrak{g}_{\mathrm{tot}}}

\newcommand{\frsl}{\mathfrak{sl}}

\newcommand{\frso}{\mathfrak{so}}

\renewcommand{\ge}{\geqslant}
\renewcommand{\le}{\leqslant}

\newcommand{\td}{\mathrm{td}}
\newcommand{\rtd}[1]{\sqrt{\td(#1)}}

\newcommand{\st}{\enskip |\enskip}
\newcommand{\sdot}{{\raisebox{0.16ex}{$\scriptscriptstyle\bullet$}}}

\newcommand{\emrp}{\mathrm{End}}

\newcommand{\ii}{i}

\newcommand{\Cl}{\mathcal{C}l}

\newcommand{\Spin}{\mathrm{Spin}}
\newcommand{\lrarr}{\longrightarrow}
\newcommand{\hrarr}{\hookrightarrow}

\newtheorem{defn}{Definition}[section]
\newtheorem{prop}[defn]{Proposition}
\newtheorem{thm}[defn]{Theorem}

\newtheorem{cor}[defn]{Corollary}

{\theoremstyle{remark}
  \newtheorem{rem}[defn]{Remark}
  
}

\makeatletter
\@addtoreset{equation}{section}
\makeatother

\title{On the Hodge structures of compact hyperk\"ahler manifolds}

\author{Andrey Soldatenkov}
\address{Institut f\"ur Mathematik, Humboldt-Universit\"at zu Berlin, Unter den Linden 6, 10099 Berlin}
\email{soldatea@hu-berlin.de}
\date{\today}
\subjclass[2010]{primary 14J32; secondary 14C30} 

\thanks{}


\begin{document}

\begin{abstract}
The purpose of this note is to give an account of a well-known
folklore result: the Hodge structure on the second cohomology of a compact
hyperk\"ahler manifold uniquely determines Hodge structures
on all higher cohomology groups. We discuss the precise statement and
its proof, which are somewhat difficult to locate in the literature.
\end{abstract}

\maketitle


\section{Introduction}

Compact hyperk\"ahler manifolds have been extensively studied
in recent decades. One of the central results of their theory
is the global Torelli theorem \cite{V4}. It addresses the problem
of reconstructing a hyperk\"ahler manifold from the Hodge structure
on its second cohomology group. It is known that in general one
can not reconstruct the manifold uniquely, and the global Torelli theorem
explains the reasons for this. It gives a description of the moduli
space of hyperk\"ahler manifolds as a certain non-Hausdorff covering space
of the period domain for the Hodge structures on the second cohomology group,
see e.g. the discussion in \cite{H3}.

Despite of the fact that it is impossible to reconstruct a hyperk\"ahler
manifold from the Hodge structure on $H^2$, one can still ask if it is possible
to recover the rational Hodge structures on higher cohomology groups from the Hodge
structure on $H^2$. It turns out that in a certain sense this is possible,
and such statements have appeared in the literature (e.g. in the preprint
version of \cite{LL} or \cite[Corollary 24.5]{GHJ}). In this note we prove a
more precise version of this result, Theorem \ref{thm_main}. A more standard version
is stated as Corollary \ref{cor_main}.
Let us remark that the proof of Theorem \ref{thm_main} does not use
the global Torelli theorem.

In section \ref{sec_coho} we recall all necessary definitions and results about the structure
of the cohomology algebra of hyperk\"ahler manifolds and sketch some
of the proofs. In section \ref{sec_main} we discuss sufficient conditions
for a complex structure to be of hyperk\"ahler type, Proposition \ref{prop_hk_type}.
In the end we prove the main result, Theorem \ref{thm_main}.

\subsection*{Acknowledgements} The write-up of this note was encouraged by Daniel Huybrechts, who has
repea\-tedly inquired the author about the proofs of the discussed statements.
I am very grateful for his interest.

\section{Cohomology of hyperk\"ahler manifolds}\label{sec_coho}

\subsection{Topological invariants}

Let $X$ be a compact $C^\infty$-manifold, $\dim_\bbR(X) = 2n$.
The singular cohomology of $X$ with rational coefficients $H^\sdot(X,\bbQ)$
is a finite-dimensional graded $\bbQ$-algebra.
We will denote by $p_i(X)\in H^{4i}(X,\bbQ)$ the rational
Pontryagin classes of $X$.

\begin{defn}\label{defn_groups}
Consider the following groups:
\begin{enumerate}
\item $\GG(X)$ -- automorphisms of the
graded algebra $H^\sdot(X,\bbQ)$ that stabilize all $p_i(X)$;
\item $\GG^+(X)$ -- automorphisms of the graded subalgebra $H^{2\sdot}(X,\bbQ)$
that stabilize all $p_i(X)$.
\end{enumerate}
\end{defn}

Define the operator $\theta\in\emrp(H^\sdot(X,\bbQ))$ as follows:
$$
\theta|_{H^{k}(X,\bbQ)} = (k-n)\mathrm{Id}.
$$
For an element $h\in H^2(X,\bbQ)$, let $L_h\in\emrp(H^\sdot(X,\bbQ))$
denote the operator of cup product with $h$. We will say that $h$ has Lefschetz property, if
$$
L_h^k\colon H^{n-k}(X,\bbQ)\stackrel{\sim}{\lrarr} H^{n+k}(X,\bbQ)
$$
is an isomorphism for all $k=0,\ldots,n$.
If $h$ has Lefschetz property, then there exists a unique $\Lambda_h\in \emrp(H^\sdot(X,\bbQ))$,
such that $(\Lambda_h,\theta,L_h)$ is an $\frsl_2$-triple.

\begin{defn}\label{defn_gtot}
Let us denote by $\frgt(X)$ the minimal
Lie subalgebra of $\emrp(H^\sdot(X,\bbQ))$ containing $\theta$, $\Lambda_h$ and $L_h$ for all
$h\in H^{2}(X,\bbQ)$ with the Lefschetz property.
\end{defn}

\begin{rem}
The groups $\GG(X)$, $\GG^+(X)$ and the Lie algebra $\frgt(X)$ depend only on the homeomorphism type of $X$.
For $\frgt(X)$ this is clear from the definition, and for $\GG(X)$, $\GG^+(X)$ it follows
from a theorem of Novikov \cite{No}.
\end{rem}

We will use the following notations. The $\bbR$-Lie algebra $\frgt(X)\otimes_{\bbQ}\bbR$ will be
denoted by $\frgt(X)_\bbR$. For any $\psi\in \GG(X)$ we have $\psi \theta \psi^{-1} = \theta$, $\psi L_h \psi^{-1} = L_{\psi(h)}$ and hence
$\psi \Lambda_h \psi^{-1} = \Lambda_{\psi(h)}$ for all $h\in H^2(X,\bbQ)$. This shows that
the adjoint action of $\psi$ preserves $\frgt(X)$. We will denote by $\mathrm{ad}_\psi$
the corresponding endomorphism of $\frgt(X)$.

\subsection{Hyperk\"ahler manifolds}

Given a complex structure $I\in\emrp(TX)$,
we will denote by $X_I$ the corresponding complex manifold, and by $\Omega^k_{X_I}$
the sheaves of holomorphic differential forms on $X_I$.
The canonical bundle will be denoted by $K_{X_I}$.

\begin{defn}\label{defn_hk}
Assume that the manifold $X$ is compact and $\pi_1(X) = 1$.
We will say that a complex structure $I$ is of hyperk\"ahler type, if
\begin{enumerate}
\item $X_I$ admits a K\"ahler metric;
\item $H^0(X_I,\Omega_{X_I}^2)$ is spanned by a symplectic form.
\end{enumerate}
In this case $X_I$ is called a hyperk\"ahler manifold. We will say
that $X$ is of hyperk\"ahler type, if it admits a complex structure of hyperk\"ahler type.
\end{defn}

Assume that $I$ is of hyperk\"ahler type, and let $\sigma\in H^0(X_I,\Omega_{X_I}^2)$ be a symplectic form.
The dimension of any symplectic manifold is even, and we let $2n = \dim_{\bbC}(X_I)$.
The form $\sigma$ defines an isomorphism $T_{X_I} \simeq \Omega^1_{X_I}$, which
shows that all odd Chern classes of $T_{X_I}$ vanish. The total Todd class
of a complex vector bundle with vanishing odd Chern classes can be expressed as a universal
polynomial in the Pontryagin classes of the underlying real bundle.
Evaluating this polynomial on the Pontryagin classes of $X$ gives an element $\td(X)\in H^{4\sdot}(X,\bbQ)$
that does not depend on the choice of the complex structure $I$.
This element is the total Todd class of $X_I$ for any $I$ of hyperk\"ahler type.
Since $\td^0(X) = 1$, there is a unique square root of $\td(X)$ with
degree zero term equal to $1$, and we denote it by $\rtd{X}\in H^{4\sdot}(X,\bbQ)$.

It was shown in \cite{HS}, that $\int_{X_I}\rtd{X} > 0$. The integral here means
evaluation of the degree $4n$ component of $\rtd{X}$ on the fundamental class of $X$.
The latter is determined by the orientation of $X$ induced by $I$. In particular,
this shows that all complex structures of hyperk\"ahler type induce the same
orientation on $X$, and that all diffeomorphisms of $X$ are orientation-preserving,
since they have to fix all polynomial expressions in Pontryagin classes.
From now on we will implicitly assume that we have fixed the orientation of $X$.

\begin{defn}\label{defn_BBF}
The Beauville-Bogomolov-Fujiki (BBF) form of $X$ is the quadratic form $q\in S^2H^2(X,\bbQ)^*$ given by
$$
q(a) = \int_X a^2\rtd{X}
$$
for all $a\in H^2(X,\bbQ)$.
\end{defn}

\begin{rem}
Usually the BBF form is defined via the Fujiki relations (\ref{eqn_fujiki2}) that we recall below.
We prefer the above definition to avoid the ambiguity in the choice of the constant in (\ref{eqn_fujiki2}).
The fact that the definition above is equivalent up to a scalar factor to the usual one
is due to \cite{Ni}, see also the discussion in \cite[section 4]{H2}.
\end{rem}

Let us list a few properties of the BBF form.
\begin{enumerate}

\item The form $q$ is non-degenerate of signature $(3,b_2(X)-3)$. If $\omega\in H^2(X,\bbR)$ is a K\"ahler class
for a complex structure of hyperk\"ahler type, then $q(\omega)>0$, see \cite[Th\'eor\`eme 5 and p. 773]{Be} and \cite[Theorem 4.2]{H2}.

\item For every $k = 1,\ldots,n$ there exists a non-zero constant $C_{X,k}\in \bbQ$, such that for all $a\in H^2(X,\bbQ)$
\begin{equation}\label{eqn_fujiki1}
\int_X a^{2k}\rtd{X} = C_{X,k} q(a)^k.
\end{equation}
This follows from \cite[Theorem 4.2]{H2} and the inequality $\int_X\rtd{X} > 0$
from \cite{HS}. In particular, for $k = n$ we get the Fujiki relation:
\begin{equation}\label{eqn_fujiki2}
\int_X a^{2n} = C_{X,n} q(a)^n.
\end{equation}

\item For all $a,b\in H^2(X,\bbC)$ we have:
\begin{equation}\label{eqn_BBF1}
\int_X a^{2n-1}b = C_{X,n}q(a)^{n-1}q(a,b).
\end{equation}
This relation follows from (\ref{eqn_fujiki2}) by substituting $a+tb$ in place of $a$
and comparing the coefficients of the obtained polynomials in $t$.

\item For all $a,b\in H^2(X,\bbC)$ such that $q(a,b)=0$, we have:
\begin{equation}\label{eqn_BBF2}
(2n-1)\int_X a^{2n-2}b^2 = C_{X,n}q(a)^{n-1}q(b).
\end{equation}
This follows from (\ref{eqn_fujiki2}) or from \cite[Th\'eor\`eme 5c]{Be}.

\item Let $\mathcal{I} \subset S^\sdot H^2(X,\bbC)$ denote the ideal generated by $a^{n+1}$ for all $a\in H^2(X,\bbC)$ with $q(a) = 0$.
Then according to \cite[Theorem 2.4 and Lemma 2.2]{B1} the multiplication in cohomology induces an embedding
\begin{equation}\label{eqn_sym}
S^\sdot H^2(X,\bbC)/\mathcal{I} \hrarr H^\sdot (X,\bbC).
\end{equation}

\end{enumerate}

\subsection{The Lie algebra action}\label{sec_lie_action}
We assume that $X_I$ is a hyperk\"ahler manifold. 
It follows from Calabi's conjecture proven by Yau,
that in this case $X$ admits two other complex structures $J$, $K$ and a Riemannian metric $g$,
such that $K = IJ = -JI$ and $g$ is K\"ahler with respect to $I$, $J$ and $K$, see e.g. \cite{Be} or \cite{GHJ}.
We will use the following notation: $\omega_I$, $\omega_J$ and $\omega_K$ will denote
the K\"ahler forms, $L_I$, $L_J$ and $L_K$ the corresponding Lefschetz operators,
$\Lambda_I$, $\Lambda_J$ and $\Lambda_K$ the dual Lefschetz operators. The complex
structures can be extended as derivations to act on the differential $k$-forms on $X$
for all $k$.
The corresponding operators will be denoted by $W_I$, $W_J$ and $W_K$. For instance, $W_I$
acts on the differential forms of $I$-type $(p,q)$ as multiplication by $\ii (p-q)$.
In the proposition below, $\Lambda^\sdot T^*\!X$ denotes the graded vector bundle of
real differential forms on $X$, and $\emrp(\Lambda^\sdot T^*\!X)$ denotes the algebra
of its endomorphisms.

\begin{prop}\label{prop_alg1}
The Lie subalgebra of $\emrp(\Lambda^\sdot T^*\!X)$ generated by the operators
$L_I$, $L_J$, $L_K$, $\Lambda_I$, $\Lambda_J$ and $\Lambda_K$ is isomorphic
to $\frso(4,1)$. We have the following commutator identities:
$$
[\Lambda_I, L_J] = W_K; \quad [\Lambda_J, L_K] = W_I; \quad [\Lambda_K, L_I] = W_J;
$$
$$
[\Lambda_I, \Lambda_J] = [\Lambda_J, \Lambda_K] = [\Lambda_K, \Lambda_I] = 0.
$$
\end{prop}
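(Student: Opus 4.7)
The plan is to verify the commutator identities pointwise and then identify the resulting Lie algebra. All the operators involved---$L_I, L_J, L_K$ (wedge product with the K\"ahler forms), $\Lambda_I, \Lambda_J, \Lambda_K$ (their pointwise metric adjoints, by the standard K\"ahler identities), and the derivations $W_I, W_J, W_K$---are $C^\infty(X)$-linear endomorphisms of the bundle $\Lambda^\sdot T^*\!X$. So every asserted identity is fiberwise, and it suffices to verify it on a single fiber $\Lambda^\sdot T^*_x X$. At any point $x\in X$, an orthonormal quaternionic basis identifies $(T_xX, g_x, I_x, J_x, K_x)$ with the standard model $(\bbH^n, g_0, I_0, J_0, K_0)$. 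Under the K\"unneth decomposition $\Lambda^\sdot(\bbH^n)^* \simeq \bigotimes_{i=1}^n\Lambda^\sdot\bbH^*$, all of our operators act as derivations and hence split as sums over the individual $\bbH$-factors; operators from different factors commute, so the verification reduces to $n=1$, i.e.\ to a finite computation on the $16$-dimensional exterior algebra of $\bbR^4$.

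On this single quaternionic factor, I would fix a basis $e_1,\,e_2 = Ie_1,\,e_3 = Je_1,\,e_4 = Ke_1$ of $\bbR^4$ and write
\[
\omega_I = e^{1}\wedge e^{2} + e^{3}\wedge e^{4}, \quad
\omega_J = e^{1}\wedge e^{3} - e^{2}\wedge e^{4}, \quad
\omega_K = e^{1}\wedge e^{4} + e^{2}\wedge e^{3},
\]
where each $\Lambda_\alpha$ is interior contraction with the bivector dual to $\omega_\alpha$. A direct computation on each graded piece of $\Lambda^\sdot(\bbR^4)^*$ then verifies $[\Lambda_I, L_J] = W_K$ together with its cyclic permutations, and $[\Lambda_I, \Lambda_J] = 0$ with its cyclic permutations. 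The computation becomes more transparent in the $I$-Hodge decomposition: the combination $\omega_J + \ii\omega_K$ is of pure $I$-type $(2,0)$, so the bracket $[\Lambda_I, L_{\omega_J + \ii\omega_K}]$ acts by a scalar on each $I$-bidegree, and separating real and imaginary parts yields the claimed formulas.

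With the three basic commutator identities of the statement in hand, the Lie algebra $\frg$ generated by the six given operators is spanned by the ten elements $\theta, L_I, L_J, L_K, \Lambda_I, \Lambda_J, \Lambda_K, W_I, W_J, W_K$: the element $\theta$ appears as $[L_\alpha, \Lambda_\alpha]$ and the $W_\alpha$ as in the statement. Closure of the bracket among these ten operators follows from a handful of further pointwise computations, producing relations such as $[\theta, L_\alpha] = 2L_\alpha$, $[\theta, W_\alpha] = 0$, $[W_I, L_J] = -2L_K$, $[W_I, \Lambda_J] = -2\Lambda_K$, and $[W_I, W_J] = -2W_K$ (and cyclic). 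To identify the resulting $10$-dimensional real Lie algebra with $\frso(4,1)$ I would use the orthogonal decomposition $\bbR^{4,1} = \bbR^{3} \oplus \bbR^{1,1}$: the $W_\alpha$ realize the compact $\frso(3)$ acting on $\bbR^3$, $\theta$ spans $\frso(1,1)$, and the triples $L_\alpha$, $\Lambda_\alpha$ are the two off-diagonal blocks. Comparing the structure constants computed above to those of this model completes the identification.

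The hard part will be sign and factor-of-$\ii$ bookkeeping in the mixed commutators on $\bbR^4$: even one stray sign would change the real form of the generated algebra (e.g.\ to $\frso(3,2)$, $\frso(5)$, or $\frsp(4,\bbR)$, all of dimension $10$). In particular, the convention $K = IJ$ propagates through the formulas for $\omega_J$ and $\omega_K$ and must be tracked consistently to ensure that $[\Lambda_I, L_J]$ produces $+W_K$ rather than $-W_K$.
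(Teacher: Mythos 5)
Your strategy is sound and the result is correct, but your route is genuinely different from the paper's. The paper makes the same first reduction---all the operators are $C^\infty(X)$-linear, so the identities are fiberwise statements about $M=\bbH^{\oplus n}$---but then, instead of computing, it invokes the theory of $k$-symplectic structures: the span of $\omega_I,\omega_J,\omega_K$ is realized as the image of a map $\eta$ from the imaginary quaternions $U$, $M$ becomes a module over the Clifford algebra $\Cl(U,\rho)$, and both the commutator identities and the identification of the generated algebra with $\frso(4,1)$ are quoted from \cite[Theorem 3.10 and Lemma 3.12]{KSV} (with \cite[Theorem 8.1]{V3} cited for the original proof). You replace that citation with a further reduction to $n=1$ and a finite computation on the $16$-dimensional algebra $\Lambda^\sdot(\bbR^4)^*$, followed by a structure-constant comparison to pin down the real form. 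This is more elementary and self-contained; the cost is that you actually have to carry out the computation and the closure/real-form bookkeeping, which you correctly identify as the delicate part.

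Two local inaccuracies in your write-up should be repaired. First, $L_\alpha$ and $\Lambda_\alpha$ are \emph{not} derivations of the exterior algebra: wedging with a $2$-form $\omega$ satisfies $L_\omega(\alpha\wedge\beta)=(L_\omega\alpha)\wedge\beta$, not the Leibniz rule. The splitting over the $\bbH$-factors that you want is nevertheless true, but for a different reason: each $\omega_\alpha$ is a sum of $2$-forms supported on the individual factors, so $L_\alpha$ and $\Lambda_\alpha$ decompose as sums of even operators each acting on a single tensor factor; operators living on distinct factors commute, the cross terms in $[\Lambda_I,L_J]$ vanish, and the bracket reduces to the $n=1$ case. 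Second, $[\Lambda_I, L_{\omega_J+\ii\omega_K}]$ does \emph{not} act by a scalar on each $I$-bidegree: $L_{\omega_J+\ii\omega_K}$ shifts $I$-type by $(+2,0)$ and $\Lambda_I$ by $(-1,-1)$, so the bracket shifts bidegree by $(+1,-1)$; consistently with this, it is a nonzero multiple of $W_J+\ii W_K$, a raising operator for the $\frsu(2)$ spanned by the $W_\alpha$, not a diagonal operator. Your primary route (a direct check on each graded piece of $\Lambda^\sdot(\bbR^4)^*$) is unaffected, but this particular ``transparent'' shortcut must be reformulated before it can be used.
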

\begin{proof}
The proof of this statement can be found in \cite[Theorem 8.1]{V3}, see also references therein.
We sketch an alternative proof, based on the theory of $k$-symplectic structures from \cite{KSV}.

It clearly suffices to prove the commutator identities pointwise, so we are reduced to the following
linear-algebraic problem. Consider $M = \bbH^{\oplus n}$ as a left $\bbH$-module with the
standard metric $g(x,y) = \sum_{k=1}^n x_k\bar{y}_k$, where the bar denotes quaternionic conjugation.
We have the operators of multiplication by imaginary quaternions $I,J,K\in\emrp(M)$
and the corresponding two-forms $\omega_I,\omega_J,\omega_K\in \Lambda^2M^*$. We need to prove
the commutator identities for the Lefschetz operators and their duals in $\emrp(\Lambda^\sdot M^*)$

Let $U\subset \bbH$ be the three-dimensional subspace of imaginary quaternions with the quadratic form
$\rho(a) = \mathrm{Re}(a^2)$. The Clifford algebra $\CC = \Cl(U,\rho)$ is by construction endowed
with a natural morphism $\CC\to \bbH$, making $M$ a left $\CC$-module. The metric $g$ is
a $\CC$-invariant symmetric bilinear form in the sense of \cite[Definition 3.1]{KSV}.
We define a map $\eta\colon U\to \Lambda^2M^*$ by sending $a\in U$ to the form $\omega_a$,
such that $\omega_a(x,y) = g(ax,y)$. The complexification of the image of $\eta$ is a 3-symplectic structure
on $M_\bbC$ in the sense of \cite[Definition 1.1]{KSV}. It is clear that the image of $\eta$ is the
linear span of $\omega_I$, $\omega_J$ and $\omega_K$. The statement now follows
from \cite[Theorem 3.10 and Lemma 3.12]{KSV}.
\end{proof}

It is known that the Lefschetz operators and their duals commute with the Laplacian
of the Riemannian metric $g$. Hence all the operators from the above proposition act on the cohomology of $X$,
and we obtain an embedding of Lie algebras $\frso(4,1)\hrarr\frgt(X)_\bbR$. This embedding
depends on the choice of the complex structure $I$ and the hyperk\"ahler metric $g$.
Using local deformation theory of complex structures on $X$, we can obtain enough
$\frso(4,1)$-subalgebras in $\frgt(X)_\bbR$ to conclude that all dual Lefschetz operators
on $X$ pairwise commute. This observation leads to the description of $\frgt(X)$ that we give below.

\begin{defn}
Let us denote by $V$ the $\bbQ$-vector space $H^2(X,\bbQ)$. Define the graded $\bbQ$-vector
space $\tilde{V} = \langle e_0\rangle \oplus V\oplus \langle e_4\rangle$ with $e_k$ of degree $k$
and $V$ in degree 2. Define the quadratic form $\tilde{q}\in S^2\tilde{V}^*$, such that $\tilde{q}|_V = q$,
and $\langle e_0, e_4\rangle$ is a hyperbolic plane orthogonal to $V$ with $q(e_0)=q(e_4)=0$ and $q(e_0,e_4) = 1$.
\end{defn}
The graded Lie algebra $\frso(\tilde{V},\tilde{q})$ has components of degrees $-2$, $0$ and $2$.
The semisimple part of $\frso(\tilde{V},\tilde{q})^0$ is isomorphic to $\frso(V,q)$, and we
have the following isomorphisms of $\frso(V,q)$-modules: $\frso(\tilde{V},\tilde{q})^{-2} \simeq \frso(\tilde{V},\tilde{q})^{2} \simeq V$,
see e.g. \cite[section 3.4]{KSV}.

\begin{prop}\label{prop_gtot}
There exists an isomorphism of graded Lie algebras $\frgt(X) \simeq \frso(\tilde{V},\tilde{q})$.
The subalgebra $\frso(V,q) \subset \frgt(X)$ acts on $H^\sdot(X,\bbQ)$ by derivations.
\end{prop}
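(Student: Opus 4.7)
The plan is to deduce the isomorphism by working out the graded structure of $\frgt(X)$ piece by piece and matching it against the standard $3$-grading of $\frso(\tilde V,\tilde q)$, and then to verify the derivation property separately. The essential preliminary step is that $[\Lambda_h,\Lambda_{h'}]=0$ for all $h,h'\in V$ with the Lefschetz property. To establish this I would combine Proposition \ref{prop_alg1} with deformation theory: for any hyperkähler metric on $X_I$ the proposition furnishes an $\frso(4,1)$-subalgebra of $\frgt(X)_\bbR$ whose degree $-2$ part is the three-dimensional abelian subspace spanned by $\Lambda_I,\Lambda_J,\Lambda_K$; varying the complex structure (using Bogomolov--Tian--Todorov unobstructedness and openness of the Kähler cone inside $H^{1,1}_\bbR$) together with the twistor family, the union of such $3$-planes of Kähler classes fills out an open subset of the positive cone $\{q>0\}\subset V_\bbR$. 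A chaining argument linking pairs of Kähler classes through common hyperkähler triples, combined with the polynomial dependence of $\Lambda_h$ on $h$, then yields commutativity of all dual Lefschetz operators, and by linearity the whole degree $-2$ part of $\frgt(X)$ is abelian.

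With pairwise commutativity in hand, the $\mathrm{ad}(\theta)$-eigenspace decomposition of $\frgt(X)$ has the following graded pieces: degree $\pm 2$ are naturally copies of $V$ via $h\mapsto L_h$ and $h\mapsto \Lambda_h$ (injectivity of the former from the embedding \eqref{eqn_sym}, of the latter from Hard Lefschetz), while degree $0$ is generated by $\theta$ together with the commutators $[\Lambda_h,L_{h'}]$. The key computation is the action of $[\Lambda_h,L_{h'}]$ on $H^2=V$: using the Fujiki relations \eqref{eqn_fujiki2}--\eqref{eqn_BBF2} and the $\frsl_2$-representation theory of the triple $(\Lambda_h,\theta,L_h)$, one expresses $[\Lambda_h,L_{h'}]$ as a sum $c(h,h')\,\theta + A(h,h')$, where $c(h,h')$ is proportional to $q(h,h')$ and $A(h,h')\in \frso(V,q)$ is the elementary skew endomorphism of $V$ built from $h,h'$ via $q$. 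The same bracket formula is obtained by a direct computation in $\frso(\tilde V,\tilde q)$ using its $3$-graded description $\frso(\tilde V,\tilde q)^{\pm 2}\simeq V$, $\frso(\tilde V,\tilde q)^0\simeq \frso(V,q)\oplus \bbQ$. Matching dimensions and brackets then yields the required graded isomorphism.

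For the derivation assertion, I would use that $\frso(V,q)$ preserves both the BBF form and the ideal $\mathcal I$ appearing in \eqref{eqn_sym}, hence acts by derivations on the subalgebra $S^\sdot V/\mathcal I\hookrightarrow H^\sdot(X,\bbC)$. This Leibniz action on $V$ uniquely determines a derivation of the subalgebra generated by $V$, and using the explicit degree $0$ commutator formula from the previous paragraph one checks that it coincides with the action on $V$ of the corresponding element of $\frgt(X)$. An $\frgt(X)$-equivariance argument (any cohomology class can be related to products of $V$-classes via brackets with operators of degree $\pm 2$) then propagates the Leibniz rule to the whole of $H^\sdot(X,\bbC)$, and rationality of the $\frso(V,q)$-action transfers the statement to $H^\sdot(X,\bbQ)$.

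The main obstacle is the preliminary commutativity statement: making the deformation-theoretic argument rigorous enough to pass from one $\frso(4,1)$ per hyperkähler triple to universal commutativity of the $\Lambda_h$ is the delicate point. Once that is in place, the remainder is a structural computation with the BBF identities and $\frsl_2$-representation theory.
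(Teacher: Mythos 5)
Your proposal sets out to reprove the Verbitsky--Looijenga--Lunts structure theorem from first principles, whereas the paper's proof of this proposition is essentially a citation plus a descent argument: it invokes \cite[Proposition 4.5]{LL} for the real statement $\frgt(X)_\bbR\simeq\frso(\tilde V_\bbR,\tilde q)$ and for the derivation property, and then only has to check that the isomorphism is defined over $\bbQ$, which it does by noting that the degree $\pm2$ components consist of the operators $L_x$ and $\Lambda_x$, that these are rational for $x\in H^2(X,\bbQ)$ with $q(x)\neq 0$, and that $\frso(\tilde V,\tilde q)$ is generated by its degree $\pm2$ parts. (The commutativity-via-deformation idea you start from is the paper's informal discussion \emph{preceding} the proposition, not its proof.) So the routes are genuinely different in scope: yours would be self-contained but long; the paper's buys brevity by treating \cite{LL} as a black box and isolating the one point --- rationality --- that the reference does not directly supply, a point your proposal only touches in passing.

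As a proof, however, your outline has gaps precisely where the content of \cite[Proposition 4.5]{LL} lies. First, ``chaining'' pairs of K\"ahler classes through common hyperk\"ahler triples cannot yield commutativity of all the $\Lambda_h$, since commutativity is not transitive; the correct mechanism is that $h\mapsto\Lambda_h$ depends algebraically on $h$ (rationally, not polynomially --- note $\Lambda_{th}=t^{-1}\Lambda_h$), so the closed condition $[\Lambda_h,\Lambda_{h'}]=0$, once verified on the Zariski-dense set of pairs lying in a common hyperk\"ahler $3$-plane, holds identically. Second, computing the action of $[\Lambda_h,L_{h'}]$ on $H^2$ does not identify $\frgt(X)^0$ unless you also know that the restriction map $\frgt(X)^0\to\emrp(V)$ is injective (this is \cite[Claim 1 on p.\ 392]{LL}, quoted later in the paper in the proof of Proposition \ref{prop_grhomo}); without it $\frgt(X)^0$ could be a larger extension. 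Third, you implicitly assume $\frgt(X)$ has no graded components of degree $|k|>2$; this must be deduced from the commutativity of the $L$'s and of the $\Lambda$'s via the Jacobi identity. Finally, your propagation of the Leibniz rule is flawed: it is not true that every cohomology class can be reached from products of $H^2$-classes by applying operators of degree $\pm2$, since $H^\sdot(X,\bbQ)$ generally contains $\frgt(X)$-submodules other than the image of \eqref{eqn_sym}, so equivariance starting from $S^\sdot V/\mathcal I$ does not reach all of $H^\sdot(X,\bbQ)$; the derivation property needs a different argument (e.g.\ the differential-geometric one via the operators $W_I$, or the algebraic one in \cite{LL}).
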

\begin{proof}
It is proven in \cite[Proposition 4.5]{LL} that $\frgt(X)_\bbR \simeq \frso(\tilde{V}_\bbR,\tilde{q})$.
To deduce the corresponding statement over $\bbQ$, note that under the embedding
$\frgt(X)_\bbR\subset \emrp(H^\sdot(X,\bbR))$ the components $\frso(\tilde{V}_\bbR,\tilde{q})^2$
and $\frso(\tilde{V}_\bbR,\tilde{q})^{-2}$ are mapped to the subspaces of Lefschetz operators,
respectively dual Lefschetz operators. These embeddings are defined over $\bbQ$, since
for $x\in H^2(X,\bbQ)$ with $q(x)\neq 0$ both $L_x$ and $\Lambda_x$ are defined over $\bbQ$.
Since $\frso(\tilde{V},\tilde{q})$ is generated by the components of degree $\pm 2$,
this proves the first statement of the proposition. The second statement follows directly
from \cite[Proposition 4.5]{LL}.
\end{proof}

It follows from Proposition \ref{prop_gtot} that there exists a representation
of $\Spin(V,q)$ in the group of algebra automorphisms of $H^\sdot(X,\bbQ)$.
Recall Definition \ref{defn_groups} of the groups $\GG(X)$
and $\GG^+(X)$, and observe that there exists a natural homomorphism
$\GG(X) \to \GG^+(X)$. 

\begin{prop}\label{prop_groups}
The action of the group $\Spin(V,q)$ on the algebra $H^\sdot(X,\bbQ)$ obtained from
Proposition \ref{prop_gtot} is induced by a homomorphism
$\alpha\colon \Spin(V,q)\to \GG(X)$. The action of $\Spin(V,q)$ on
$H^{2\sdot}(X,\bbQ)$ factors through $\mathrm{SO}(V,q)$:
\begin{equation}\label{eqn_groups}
\begin{tikzcd}[]
\Spin(V,q) \rar{\alpha}\arrow{d} & \GG(X) \arrow{d} \\
\mathrm{SO}(V,q) \rar{\alpha^+} & \GG^+(X)
\end{tikzcd}\nonumber
\end{equation}
\end{prop}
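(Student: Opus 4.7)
The plan is to integrate the Lie algebra action of $\frso(V,q)$ on $H^\sdot(X,\bbQ)$ provided by Proposition \ref{prop_gtot} to the simply connected group $\Spin(V,q)$, verify that this action fixes the Pontryagin classes of $X$, and then show that the central element $-1\in \Spin(V,q)$ acts trivially on even cohomology. For the integration step, one uses that $\frso(V,q)$ is a semisimple $\bbQ$-Lie algebra, that $\Spin(V,q)$ is the simply connected algebraic $\bbQ$-group with this Lie algebra, and that any representation of $\frso(V,q)$ on a finite-dimensional $\bbQ$-vector space therefore integrates uniquely to $\Spin(V,q)$. Because the action on $H^\sdot(X,\bbQ)$ is by degree-zero derivations, the integrated action of $\Spin(V,q)$ is by grading-preserving algebra automorphisms, so landing in $\GG(X)$ amounts exactly to the $\frso(V,q)$-invariance of each Pontryagin class $p_i(X)$.

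I expect this invariance to be the main obstacle, and my plan is to check it differential-geometrically. Fix a hyperk\"ahler metric $g$ on $X$ with a compatible triple $(I,J,K)$. By Chern--Weil theory, the class $p_i(X)$ is represented by a harmonic form built as a universal polynomial in the Levi-Civita curvature $R$; the hyperk\"ahler condition says $R$ is of type $(1,1)$ with respect to each of $I$, $J$, $K$ simultaneously, so the Chern--Weil representative is of Hodge type $(2i,2i)$ with respect to each complex structure. Consequently the Weil operators satisfy $W_I\cdot p_i = W_J\cdot p_i = W_K\cdot p_i = 0$ in cohomology. Varying the hyperk\"ahler triple via local deformation theory, the resulting triples $W_I, W_J, W_K$ span all of $\frso(V,q)_\bbR \subset \frgt(X)_\bbR$ (this is the input already used in the proof of Proposition \ref{prop_gtot}), so $\frso(V,q)$ annihilates each $p_i(X)$.

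For the factorization on even cohomology, I would analyze the subalgebra $\frso(3) = \langle W_I, W_J, W_K\rangle \subset \frso(V,q)$ coming from a fixed hyperk\"ahler triple. Hodge-theoretic properties of the holomorphic symplectic form $\sigma = \omega_J + i\omega_K$ show that $\omega_I, \omega_J, \omega_K$ are mutually $q$-orthogonal with positive $q$-values, so the three-plane they span in $V$ is $q$-positive-definite. The pullback of the double cover $\Spin(V,q)\to \SO(V,q)$ along the corresponding $\SO(3)\hookrightarrow \SO(V,q)$ is therefore the non-trivial cover $\SU(2)\to \SO(3)$, so the Lie group map $\SU(2)\to \Spin(V,q)$ integrating $\frso(3)\hookrightarrow \frso(V,q)$ is injective and identifies the central elements $-1$. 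By uniqueness of integration, the restriction of $\alpha$ to this $\SU(2)$ agrees with Verbitsky's $\SU(2)$-action on $H^\sdot(X,\bbR)$, under which $W_I$ acts on the Hodge piece $H^{p,q}_I$ as multiplication by $i(p-q)$. Hence the weights appearing inside any $\SU(2)$-sub\-representation of $H^k$ all have the parity of $k$, and $-1\in \SU(2) = -1\in \Spin(V,q)$ acts on $H^k$ as $(-1)^k$. This is trivial on $H^{2\sdot}(X,\bbQ)$, giving the desired factorization $\alpha^+\colon \SO(V,q)\to \GG^+(X)$.
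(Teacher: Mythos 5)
Your proposal is correct and takes essentially the same route as the paper, whose proof consists of citing \cite{LL} (Proposition 4.8) for the fact that $\Spin(V,q)$ fixes the Pontryagin classes --- because they are of type $(p,p)$ for K\"ahler complex structures and hence annihilated by the Weil operators, which generate $\frso(V,q)_\bbR$ --- and \cite{V3} (Corollary 8.2) for the factorization through $\mathrm{SO}(V,q)$, which rests on the element $-1\in\Spin(V,q)$ acting as $(-1)^k$ on $H^k(X,\bbQ)$ via the $\SU(2)$-action of a hyperk\"ahler triple. Your Chern--Weil computation and your analysis of $\SU(2)\subset\Spin(V,q)_\bbR$ are precisely the standard proofs of these two cited facts, so you have in effect unpacked the paper's references rather than found a different argument.
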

\begin{proof}

Since the Pontryagin classes of $X$ are of Hodge type $(p,p)$ for all
complex structures admitting a K\"ahler metric, one deduces that
$\Spin(V,q)$ fixes all the Pontryagin classes, see \cite[Proposition 4.8]{LL}.
This gives a homomorphism $\alpha$. It was shown in \cite[Corollary 8.2]{V3}
that the composition of $\alpha$ and the homomorphism $\GG(X) \to \GG^+(X)$
factors through $\mathrm{SO}(V,q)$.
\end{proof}

\begin{prop}\label{prop_grhomo}
For an element $\psi\in \GG(X)$ let $\varphi = \psi_2$ denote its degree two component acting
on $V = H^2(X,\bbQ)$. Then $\varphi\in \mathrm{O}(V,q)$. For any $x\in \frso(V,q)\subset \frgt(X)^0$,
we have $\mathrm{ad}_\psi(x) = \mathrm{ad}_{\varphi}(x)$.
\end{prop}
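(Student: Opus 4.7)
The plan is to address the two claims separately. For $\varphi \in \rmO(V,q)$, I would use that $\psi$ preserves all Pontryagin classes and is a graded algebra map, so it fixes the universal class $\rtd{X} \in H^{4\sdot}(X,\bbQ)$. Because $H^{4n}(X,\bbQ)$ is one-dimensional, $\psi$ acts on it by some scalar $\lambda$; but the top-degree component of $\rtd{X}$ is fixed by $\psi$ and has positive integral by \cite{HS}, forcing $\lambda = 1$. It then follows, for any $a \in V$, that
$$
q(\varphi(a)) = \int_X \psi(a)^2\,\rtd{X} = \int_X \psi(a^2)\cdot\psi(\rtd{X}) = \int_X \psi(a^2\,\rtd{X}) = \int_X a^2\,\rtd{X} = q(a),
$$
proving the first assertion.

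For the second, I first note that $\mathrm{ad}_\psi$ is a graded Lie algebra automorphism of $\frgt(X)$: conjugating a degree-$d$ operator on $H^\sdot(X,\bbQ)$ by the graded automorphism $\psi$ again has degree $d$. Hence $\mathrm{ad}_\psi(x) \in \frgt(X)^0$. A short inspection of $\frso(\tilde V,\tilde q)^0$ via the identification of Proposition \ref{prop_gtot} shows that
$$
\frgt(X)^0 = \frso(V,q)\ \oplus\ \bbQ\cdot\theta,
$$
the extra summand being spanned by the grading element of $\frso(\tilde V,\tilde q)$. So we may write uniquely $\mathrm{ad}_\psi(x) = y + c\,\theta$ with $y \in \frso(V,q)$ and $c \in \bbQ$.

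The crux is to show $c = 0$. By Proposition \ref{prop_gtot}, every element of $\frso(V,q)$ acts on $H^\sdot(X,\bbQ)$ by derivations, so $x(1) = y(1) = 0$; combined with $\psi(1)=\psi^{-1}(1)=1$, this gives
$$
\mathrm{ad}_\psi(x)(1) = \psi\bigl(x(\psi^{-1}(1))\bigr) = \psi(x(1)) = 0.
$$
Comparing with $\theta(1) = -n \neq 0$ forces $c = 0$, so $\mathrm{ad}_\psi(x) \in \frso(V,q)$. Since the representation $\frso(V,q) \to \End(V)$ is faithful, it now suffices to check $\mathrm{ad}_\psi(x)|_V = \mathrm{ad}_\varphi(x)|_V$; this is immediate from $\psi|_V = \varphi$ and the fact that the degree-$0$ operator $x$ preserves $V$. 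The only non-formal point in this argument is ruling out the $\theta$-component of $\mathrm{ad}_\psi(x)$, which is settled by the unit-evaluation computation above; the rest is purely mechanical.
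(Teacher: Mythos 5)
Your proof is correct and follows essentially the same route as the paper: the first claim via invariance of $\rtd{X}$ and of the top-degree integral, the second by reducing to the action on $H^2$. Your extra step showing the $\theta$-component of $\mathrm{ad}_\psi(x)$ vanishes is a nice explicit version of something the paper leaves implicit (a graded Lie algebra automorphism of $\frgt(X)$ preserves the semisimple part of $\frgt(X)^0$, which is how the paper tacitly knows $\mathrm{ad}_\psi(x)\in\frso(V,q)$). The one place you are too quick is the phrase ``since the representation $\frso(V,q)\to\End(V)$ is faithful'': what you actually need is that the restriction of the $\frgt(X)$-action to $H^2(X,\bbQ)$ identifies the subalgebra $\frso(V,q)\subset\frgt(X)^0$ with the \emph{standard} copy of $\frso(V,q)$ in $\End(V)$. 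Faithfulness alone does not give the $\rmO(V,q)$-equivariance $r(\mathrm{ad}_\varphi(x))=\varphi\, r(x)\,\varphi^{-1}$ that your final ``immediate'' step uses, and it is also what makes the comparison with $\mathrm{ad}_\varphi$ meaningful in the first place. This is precisely the nontrivial input the paper supplies by citing Claim 1 on p.~392 of Looijenga--Lunts; with that reference added, your argument is complete.
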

\begin{proof}
The first statement follows from Definition \ref{defn_BBF}, because $\GG(X)$ fixes
all the Pontryagin classes, and $H^{4n}(X,\bbQ)$ is spanned by a polynomial
in the Pontryagin classes (see the paragraph before Definition \ref{defn_BBF}).

For the second statement, consider the composition of the inclusion $\frso(V,q)\subset \frgt(X)$
and the homomorphism $\frgt(X)\to \emrp(V)$ obtained by restricting the action of $\frgt(X)$ 
to $H^2(X,\bbQ)$.
This composition equals the canonical embedding $\frso(V,q)\subset \emrp(V)$ (see e.g. \cite[Claim 1 on p. 392]{LL}), so the adjoint
action of $\psi$ on $\frso(V,q)$ is determined by the action of its degree two component,
which is $\varphi$.
\end{proof}

\section{Hodge structures on the cohomology of hyperk\"ahler manifolds}\label{sec_main}

\subsection{Complex structures of hyperk\"ahler type}

If $I_1$ and $I_2$ are two complex structures on $X$, and $I_1$ is
of hyperk\"ahler type, it is not a priory clear that $I_2$ is also
of hyperk\"ahler type. Two conditions are necessary for this:
$I_2$ should admit a K\"ahler metric, and the canonical bundle of $X_{I_2}$
should be trivial. The following
lemma shows that these conditions are also sufficient under a technical
assumption on $b_2(X)$.

\begin{prop}\label{prop_hk_type}
Assume that $X$ is of hyperk\"ahler type with $b_2(X)\ge 5$.
Let $I$ be an arbitrary complex structure on $X$.
The following conditions are equivalent:
\begin{enumerate}
\item $I$ is of hyperk\"ahler type;
\item $I$ admits a K\"ahler metric and $c_1(K_{X_I}) = 0$.
\end{enumerate}
\end{prop}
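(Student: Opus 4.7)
The direction $(1)\Rightarrow(2)$ is immediate: a hyperk\"ahler manifold is K\"ahler by definition, and the top power $\sigma^n$ of a symplectic form $\sigma \in H^0(X_I,\Omega^2_{X_I})$ provides a nowhere-vanishing section of $K_{X_I}$. For the converse, the plan is to combine the Beauville decomposition theorem with the Fujiki relations (\ref{eqn_fujiki2})--(\ref{eqn_BBF2}) and a Hodge--Riemann analysis of the signature of the BBF form $q$, so as to force $X_I$ to be a single irreducible holomorphic symplectic (IHS) manifold.

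First I apply Beauville's decomposition: since $X_I$ is simply connected, K\"ahler and has trivial canonical bundle, it decomposes as $X_I\simeq \prod_{i=1}^k M_i$ with each $M_i$ either a strict Calabi--Yau or an IHS manifold (and no torus factor, by $\pi_1(X)=1$); the aim is to show $k=1$ with $M_1$ IHS. Next I determine $h^{2,0}_I$ by a signature argument. For a K\"ahler class $\omega$ of $I$, Fujiki's relation (\ref{eqn_fujiki2}) combined with $\int_X\omega^{2n}>0$ and $C_{X,n}>0$ (the latter obtained by evaluating (\ref{eqn_fujiki2}) on a K\"ahler class of a hyperk\"ahler structure on $X$, where $q>0$ by property~(1) of \S\ref{sec_coho}) shows $q(\omega)\neq 0$. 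A bidegree argument on (\ref{eqn_BBF1}) yields $q(\omega,b)=0$ for all $b\in (H^{2,0}_I+H^{0,2}_I)_\bbR$, and since $\rtd{X}$ is expressible via Pontryagin classes (hence of Hodge type $(\ell,\ell)$ for any K\"ahler $I$) one similarly gets $q(H^{1,1}_I,(H^{2,0}_I+H^{0,2}_I)_\bbR)=0$. Thus $H^2(X,\bbR)$ splits $q$-orthogonally as $\bbR\omega\oplus(H^{1,1}_I)_{\mathrm{prim}}\oplus(H^{2,0}_I+H^{0,2}_I)_\bbR$, and applying (\ref{eqn_BBF2}) with $a=\omega$ together with the Hodge--Riemann bilinear relations (positive on the $(2,0)+(0,2)$ part, negative on primitive $(1,1)$) pins down the sign of $q$ on each summand in terms of $\mathrm{sgn}(q(\omega))$. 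Matching with the intrinsic signature $(3,b_2(X)-3)$ yields $h^{2,0}_I=1$ when $q(\omega)>0$ and $h^{2,0}_I=(b_2(X)-4)/2$ when $q(\omega)<0$; so $b_2(X)\ge 5$ forces $h^{2,0}_I\ge 1$ in either case.

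Finally, I use Fujiki once more to rule out $k\ge 2$. For $a\in H^2(M_i)$ the class $a^{2n}$ vanishes in $H^{4n}(X)$ for dimension reasons, so (\ref{eqn_fujiki2}) forces the restriction $q|_{H^2(M_i)}\equiv 0$. By K\"unneth together with the multiplicativity $\rtd{X}=\prod_\ell \rtd{M_\ell}$ of the smooth Todd class, the cross-pairing $q(H^2(M_i),H^2(M_j))$ can then be computed explicitly; since $\rtd{X}$ is a polynomial in Pontryagin classes and so is concentrated in degrees divisible by $4$, this cross-pairing vanishes whenever one of $d_i:=\dim_\bbC M_i$ or $d_j$ is even. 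IHS manifolds have even complex dimension, so any IHS factor $M_j$ in a decomposition with $k\ge 2$ would be forced into the radical of $q$, contradicting nondegeneracy of $q$ together with $h^{1,1}(M_j)\ge 1$. Combined with $h^{2,0}_I\ge 1$ (which forces the decomposition to contain at least one IHS factor), this gives $k=1$ and $M_1$ IHS, so $I$ is of hyperk\"ahler type. The principal obstacle is the degree bookkeeping in this last step---tracking how the smooth class $\rtd{X}$ decomposes under K\"unneth---and the hypothesis $b_2(X)\ge 5$ is used both to rule out low-$b_2$ exotic configurations arising from the signature analysis and to ensure that $(b_2-4)/2\ge 1$ in the flipped-signature case.
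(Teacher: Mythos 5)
Your argument is correct, but the step where you rule out nontrivial product decompositions takes a genuinely different route from the paper's. Both proofs start from the Bogomolov decomposition and both use a Hodge--Riemann/signature computation, but the paper disposes of lower-dimensional symplectic factors in one line using the Verbitsky--Bogomolov embedding (\ref{eqn_sym}): a factor $Z_i$ with $\dim_\bbC Z_i<2n$ carries a symplectic class $\sigma$ with $(\pi^*\sigma)^n=\pi^*(\sigma^n)=0$, contradicting injectivity of $S^nH^2(X,\bbC)\to H^{2n}(X,\bbC)$; this leaves only the pure Calabi--Yau case $X_I\simeq Y$, which is killed by showing $q$ would be sign-definite on the $q$-orthogonal complement of a K\"ahler class, impossible for signature $(3,b_2-3)$ with $b_2\ge 5$. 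You avoid (\ref{eqn_sym}) entirely: you push the signature analysis further to get $h^{2,0}_I\ge 1$ in all cases (your two branches $h^{2,0}=1$ and $h^{2,0}=(b_2-4)/2$ are exactly right once the orientation ambiguity is absorbed into $\mathrm{sgn}\bigl(q(\omega)^{n-1}\bigr)$), and you exclude $k\ge 2$ by showing an IHS factor's $H^2$ would lie in the radical of $q$ --- isotropic by Fujiki, and $q$-orthogonal to the other summands because the degree-$(2d_i-2)$ component of $\rtd{X}$ on an even-dimensional factor vanishes. The degree bookkeeping checks out, but you should make explicit why $\rtd{X}=\prod_\ell\pi_\ell^*\rtd{M_\ell}$: the universal polynomial expressing $\td$ in Pontryagin classes is a multiplicative sequence, rational Pontryagin classes are multiplicative for products, and the square root normalized by constant term $1$ is again multiplicative (note the resulting classes on the factors $M_\ell$ need not be their actual Todd classes, but all you use is that they live in $H^{4\sdot}$). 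In exchange for this extra Künneth analysis your proof replaces the structural input (\ref{eqn_sym}) by the Fujiki relations alone; the paper's route is shorter, yours is more self-contained at the level of the quadratic form.
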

\begin{proof}
Since the top exterior power of a symplectic form trivializes the canonical bundle, the implication (1)$\Rightarrow$(2)
is obvious. Let us prove the converse.

According to the decomposition theorem of Bogomolov \cite{B2},
$$
X_I \simeq Y\times Z_1\times\ldots\times Z_m,
$$ 
where $Y$ is a Calabi-Yau manifold with $h^{2,0}(Y) = 0$ and
$Z_i$ are hyperk\"ahler manifolds in the sense of Definition \ref{defn_hk}. Let $\dim_\bbC(X_I)= 2n$ and
$\dim_\bbC(Z_i)= 2n_i$. 

It follows from (\ref{eqn_sym}) that the multiplication map $S^nH^2(X,\bbC) \to H^{2n}(X,\bbC)$
is injective. Assume that $n_i<n$ for some $i$. Let $\pi\colon X_I\to Z_i$ be the
projection and $\sigma\in H^0(Z_i,\Omega^2_{Z_i})$ be the symplectic form.
Then $(\pi^*\sigma)^n = \pi^*(\sigma^n) = 0$, which is a contradiction.
We conclude that $m\le 1$, and if $m=1$, then $X_I\simeq Z_1$.

It remains to exclude the case $m=0$, i.e. $X_I\simeq Y$. Assuming
that this is the case, let $\omega\in H^2(X,\bbR)$ be a K\"ahler class for $I$
and $H^2_{\omega}(X,\bbR) = \{a\in H^2(X,\bbR) \st \int_X\omega^{2n-1}a = 0\}$.
It follows from (\ref{eqn_fujiki2}) that $q(\omega)\neq 0$.
The equation (\ref{eqn_BBF1}) shows that $H^2_{\omega}(X,\bbR)$ is the $q$-orthogonal
complement of $\omega$. Since we assume that $h^{2,0}(X_I) = 0$, the Hodge-Riemann
bilinear relations and the formula (\ref{eqn_BBF2}) with $a=\omega$ imply that $q$ is sign-definite
on $H^2_{\omega}(X,\bbR)$. Since the signature of $q$ is $(3, b_2(X)-3)$,
this contradicts our assumptions on $b_2(X)$. This completes the proof.
\end{proof}

\subsection{Hodge structures}
As before, we will denote by $q$ the BBF form on $V = H^2(X,\bbQ)$, see Definition \ref{defn_BBF}.
Let $I_1$, $I_2$ be complex structures of hyperk\"ahler type on $X$. Then $H^2(X_{I_1},\bbQ)$ and
$H^2(X_{I_2},\bbQ)$ are rational Hodge structures having the same underlying vector space $V$.

\begin{defn}\label{defn_iso}
A rational Hodge isometry between $H^2(X_{I_1},\bbQ)$ and $H^2(X_{I_2},\bbQ)$ is an element
$\varphi\in \mathrm{O}(V,q)$, such that $\varphi(H^{p,q}(X_{I_1})) = H^{p,q}(X_{I_2})$
for all $p + q = 2$.
\end{defn}

\begin{defn}\label{defn_subgr}
Define the following subgroups of $\mathrm{O}(V,q)$:
\begin{enumerate}
\item
$\mathcal{J}^+$ is the image of the homomorphism
$\GG^+(X)\to \mathrm{O}(V,q)$ (see Proposition \ref{prop_grhomo});
\item
$\mathcal{J}\subset \mathcal{J^+}$ is the image of the composition
$\GG(X)\to \GG^+(X)\to \mathrm{O}(V,q)$.
\end{enumerate}
\end{defn}

We are interested in Hodge isometries that are contained either in $\JJ$ or in $\JJ^+$.
Let us give some sufficient conditions for an isometry of $(V,q)$ to be
contained in one of these groups. Recall that there exists a group homomorphism
$$
SN\colon \mathrm{SO}(V,q) \to \bbQ^\times/(\bbQ^\times)^2,
$$
called the spinor norm, and that
$$
\mathrm{Ker}(SN) = \mathrm{Im}(\Spin(V,q)\to \mathrm{SO}(V,q)),
$$ see e.g. \cite[Abschnitt 8]{Kn}.

\begin{prop}\label{prop_cond} We have the following inclusions:
\begin{enumerate}
\item $\mathrm{SO}(V,q)\subset \JJ^+$;
\item $\mathrm{Ker}(SN) \subset \JJ$.
\end{enumerate}
\end{prop}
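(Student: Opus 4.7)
The plan is to read both inclusions off from the data already assembled in Propositions \ref{prop_groups} and \ref{prop_grhomo}. The key technical input is the observation recorded in the proof of Proposition \ref{prop_grhomo}: restricting the $\frgt(X)$-action to $V = H^2(X,\bbQ)$ sends the subalgebra $\frso(V,q) \subset \frgt(X)^0$ to the canonical subalgebra $\frso(V,q) \subset \emrp(V)$ via the identity. Everything else is bookkeeping.

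For (1), I would consider the composition
$$
\beta \colon \mathrm{SO}(V,q) \xrightarrow{\alpha^+} \GG^+(X) \to \mathrm{O}(V,q),
$$
where $\alpha^+$ comes from Proposition \ref{prop_groups} and the second arrow is the one from Definition \ref{defn_subgr}(1). The differential of $\beta$ at the identity is, by construction, the Lie algebra map $\frso(V,q) \to \frgl(V)$ obtained by restricting the action of $\frso(V,q) \subset \frgt(X)$ to $V$, and by Proposition \ref{prop_grhomo} this is the standard inclusion. Since $\mathrm{SO}(V,q)$ is a connected $\bbQ$-algebraic group and $\beta$ is algebraic, it follows that $\beta$ coincides with the tautological inclusion $\mathrm{SO}(V,q) \hookrightarrow \mathrm{O}(V,q)$. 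In particular, every element of $\mathrm{SO}(V,q)$ lies in the image of $\GG^+(X) \to \mathrm{O}(V,q)$, which is precisely the claim $\mathrm{SO}(V,q) \subset \JJ^+$.

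For (2), the commutative square (\ref{eqn_groups}) of Proposition \ref{prop_groups} together with (1) shows that the composition
$$
\Spin(V,q) \xrightarrow{\alpha} \GG(X) \to \GG^+(X) \to \mathrm{O}(V,q)
$$
factors as $\Spin(V,q) \to \mathrm{SO}(V,q) \hookrightarrow \mathrm{O}(V,q)$, where the first arrow is the canonical projection. Its image is therefore the image of $\Spin(V,q) \to \mathrm{SO}(V,q)$, which by the recalled description of the spinor norm equals $\mathrm{Ker}(SN)$. Hence $\mathrm{Ker}(SN) \subset \JJ$.

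The only point that requires a moment's thought is the passage from the Lie algebra equality in Proposition \ref{prop_grhomo} to the corresponding group-theoretic equality over $\bbQ$; this is not a real obstacle because $\mathrm{SO}(V,q)$ is a connected $\bbQ$-algebraic group, so a morphism of algebraic groups out of it is determined by its differential at the identity. Once this routine remark is in place, both statements of the proposition are immediate.
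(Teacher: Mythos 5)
Your proof is correct and is essentially the paper's argument: the paper's own proof consists of the single sentence ``Both inclusions easily follow from the definitions and Proposition \ref{prop_groups},'' and what you have written is exactly the bookkeeping that sentence leaves implicit, with the key point being that the composite $\Spin(V,q)\to\GG(X)\to\mathrm{O}(V,q)$ is the standard covering map because its differential is the canonical embedding $\frso(V,q)\subset\emrp(V)$ from Proposition \ref{prop_grhomo}. No discrepancy to report.
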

\begin{proof}
Both inclusions easily follow from the definitions and Proposition \ref{prop_groups}.
\end{proof}

\begin{rem}
The inclusions in Proposition \ref{prop_cond} are in general strict.
For example, any diffeomorphism $\Phi\colon X\to X$ induces an isometry
$\varphi = \Phi^*$ of $(V,q)$, and $\varphi\in\JJ$.
But such $\varphi$ does not in general preserve the orientation on $V$,
so does not always lie in $\mathrm{SO}(V,q)$.

Since $\mathrm{SO}(V,q)$ is of index two in $\mathrm{O}(V,q)$,
it is enough to produce one element of $\JJ$ that does not preserve the
orientation on $V$ to prove that $\JJ^+ = \mathrm{O}(V,q)$.
For all known examples of compact hyperk\"ahler manifolds
one can do that, because their monodromy group (see \cite[Definition 1.1]{Ma})
contains reflections along the classes of prime exceptional divisors
(see \cite[Definition 5.1]{Ma}). For varieties of $\mathrm{K3}^{[n]}$ type, generalized
Kummer type and O'Grady's 10-dimensional example, see Theorem 9.1 and
two paragraphs after Conjecture 10.6 in \cite{Ma}. For O'Grady's 6-dimensional
example the existence of a prime exceptional divisor follows from \cite{Na}.
\end{rem}

We can now state the main result.

\begin{thm}\label{thm_main}
Let $I_1$ and $I_2$ be two complex structures of hyperk\"ahler type on a compact
simply-connected manifold $X$ with $\dim_{\bbR}(X) = 4n$. Assume that there exists a rational Hodge isometry
$$
\varphi\colon H^2(X_{I_1},\bbQ) \stackrel{\sim}{\lrarr} H^2(X_{I_2},\bbQ).
$$

\begin{enumerate}
\item If $\varphi\in \JJ$, then there exists an isomorphism of rational Hodge structures
$$
\psi\colon H^\sdot(X_{I_1},\bbQ) \stackrel{\sim}{\lrarr} H^\sdot(X_{I_2},\bbQ)
$$
that extends $\varphi$, respects the grading and the algebra structure;

\item If $\varphi\in \JJ^+$, then there exists an isomorphism of rational Hodge structures
$$
\psi\colon H^{2\sdot}(X_{I_1},\bbQ) \stackrel{\sim}{\lrarr} H^{2\sdot}(X_{I_2},\bbQ)
$$
that extends $\varphi$, respects the grading and the algebra structure.
\end{enumerate}
\end{thm}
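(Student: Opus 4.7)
The plan is to encode each Hodge structure on $H^\sdot(X_{I_j},\bbQ)$ by its Weil operator $W_{I_j}$, and to show that $W_{I_j}$ actually lies in the subalgebra $\frso(V,q)\subset\frgt(X)$, tensored with $\bbR$. Once this is established, Proposition \ref{prop_grhomo} will let us transfer the Hodge isometry condition on $H^2$ into a Hodge isomorphism on all of $H^\sdot$.

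The key step is the identification $W_{I_j}\in\frso(V,q)_\bbR$. On $V = H^2$ the operator $W_{I_j}$ is $q$-skew: it vanishes on the real part of $H^{1,1}(X_{I_j})$ and acts as a rotation on the $q$-positive $2$-plane $(H^{2,0}\oplus H^{0,2})\cap V_\bbR$, the two summands being $q$-orthogonal by the first Hodge-Riemann relation. To lift this to the full operator on $H^\sdot(X,\bbR)$, I would fix a hyperk\"ahler triple $(I_j,J,K)$ and use the commutator identity $W_{I_j} = [\Lambda_J,L_K]$ from Proposition \ref{prop_alg1}: since $L_K$ and $\Lambda_J$ lie in the degree $\pm 2$ parts of $\frgt(X)_\bbR \cong \frso(\tilde V_\bbR,\tilde q)$, their commutator lies in the degree zero part $\frso(V,q)_\bbR \oplus \bbR\theta$, and a trace computation (the eigenvalues $\ii(p-q)$ of $W_{I_j}$ on $H^{p,q}$ cancel pairwise by complex conjugation) eliminates the $\theta$-component. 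A crucial consequence is that the Hodge decomposition of each $H^k(X_{I_j},\bbC)$ is exactly the eigenspace decomposition of $W_{I_j}$.

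With this in hand, the rest is formal. Since $\varphi\in\JJ$, Definition \ref{defn_subgr} supplies a lift $\psi\in\GG(X)$ with degree two component $\varphi$. The hypothesis that $\varphi$ is a Hodge isometry is equivalent to $\mathrm{ad}_\varphi(W_{I_1}) = W_{I_2}$ in $\frso(V,q)_\bbR$, because the Weil operator both determines and is determined by the Hodge decomposition of $V$. Proposition \ref{prop_grhomo} then yields $\mathrm{ad}_\psi(W_{I_1}) = W_{I_2}$ in $\frgt(X)_\bbR$, which, evaluated on $H^\sdot(X,\bbR)$, becomes $\psi\, W_{I_1}\,\psi^{-1} = W_{I_2}$. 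Combined with the grading-preservation of $\psi$ and the eigenspace description of the Hodge decomposition, this forces $\psi(H^{p,q}(X_{I_1})) = H^{p,q}(X_{I_2})$ for all $p,q$, proving (1). Part (2) is formally identical after replacing $\GG(X)$ and $H^\sdot$ by $\GG^+(X)$ and $H^{2\sdot}$ throughout, the proof of Proposition \ref{prop_grhomo} applying verbatim in that setting. The principal obstacle is the identification in the second paragraph: because $H^\sdot(X,\bbC)$ need not be generated as an algebra by $H^2$, the equality of $W_{I_j}$ with the abstract $\frso(V,q)$-action does not follow from a naive generator argument and truly uses the description of $\frgt(X)$ given by Proposition \ref{prop_gtot}.
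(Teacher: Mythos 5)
Your proposal is correct and follows essentially the same route as the paper: encode the Hodge structures by the Weil operators $w_j = W_{I_j}$, observe via Proposition \ref{prop_alg1} that they lie in $\frso(V,q)_\bbR\subset\frgt(X)^0_\bbR$, and use Proposition \ref{prop_grhomo} to convert $\mathrm{ad}_\varphi(w_1)=w_2$ into $\mathrm{ad}_\psi(w_1)=w_2$ for a lift $\psi$ of $\varphi$. Your extra justification that $W_{I_j}=[\Lambda_J,L_K]$ lands in $\frso(V,q)_\bbR$ with no $\theta$-component (by the trace argument) is a correct fleshing-out of a step the paper leaves to the cited proposition.
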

\begin{proof}
Assume that $\varphi\in \JJ$, and let $\psi\in\GG(X)$ be a preimage of $\varphi$, see Definition \ref{defn_subgr}.
The action of $\psi$ respects the algebra structure and the grading by the definition of $\GG(X)$.
It remains to check that $\psi$ is a morphism of Hodge structures.
The complex structures $I_1$ and $I_2$ can be completed to a pair of hyperk\"ahler
structures $I_1$, $J_1$, $K_1$ and $I_2$, $J_2$, $K_2$, see section \ref{sec_lie_action}. Consider the operators $W_{I_1}$ and $W_{I_2}$
from Proposition \ref{prop_alg1}. The action of these operators on differential forms descends
to cohomology, so let us denote by $w_1$ and $w_2$ the corresponding endomorphisms of $H^\sdot(X,\bbC)$.
The endomorphisms $w_1$ and $w_2$ are the Weil operators that induce the Hodge decomposition on the cohomology.
It follows from Proposition \ref{prop_alg1}, that $w_1, w_2\in \frso(V_\bbR,q)\subset \frgt(X)^0_\bbR$.

By our assumptions, $\varphi$ is a morphism of Hodge structures. In terms of the Weil operators,
this means $\mathrm{ad}_\varphi(w_1) = w_2$. By Proposition \ref{prop_grhomo} the adjoint action of $\psi$ on $\frso(V,q)$ is
determined by the action of its degree two component, which equals $\varphi$. Hence we have
$\mathrm{ad}_\psi(w_1) = w_2$. This shows that the components $\psi_k$ in every degree $k$
are morphisms of Hodge structures. This proves the first part of the theorem. The proof of the
second part is analogous.
\end{proof}

\begin{cor}\label{cor_main}
Let $I_1$ and $I_2$ be two complex structures of hyperk\"ahler type on a compact
simply-connected manifold $X$. Assume that $I_1$ and $I_2$ define the same Hodge
structure on $H^2(X,\bbQ)$. Then they define the same Hodge structure on $H^k(X,\bbQ)$
for all $k$.
\end{cor}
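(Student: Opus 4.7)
The plan is to argue directly from the Lie algebra action of $\frso(V, q) \subset \frgt(X)$ on $H^\sdot(X, \bbQ)$ provided by Proposition \ref{prop_gtot}, without really needing to invoke Theorem \ref{thm_main}. The key point, already used inside the proof of that theorem, is that the Weil operator on the entire cohomology of a hyperk\"ahler manifold lies in $\frso(V, q)$ and is therefore determined by its restriction to $H^2$.

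More precisely, I would complete each $I_j$ to a hyperk\"ahler triple as in Section \ref{sec_lie_action} and note, using Proposition \ref{prop_alg1} together with the identification $\frgt(X) \simeq \frso(\tilde V, \tilde q)$ from Proposition \ref{prop_gtot}, that the induced Weil operator $w_j$ on $H^\sdot(X, \bbC)$ lies in $\frso(V_\bbC, q) \subset \frgt(X)^0 \otimes \bbC$. The Hodge decomposition of $H^k(X, \bbC)$ with respect to $I_j$ is then the eigenspace decomposition of $w_j$, with $H^{p,q}_{I_j}$ equal to the $\ii(p-q)$-eigenspace.

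Since the defining representation of $\frso(V, q)$ on $V$ is faithful, an element of $\frso(V_\bbC, q)$ is determined by its action on $V$. By hypothesis the Hodge structures on $V = H^2(X, \bbQ)$ induced by $I_1$ and $I_2$ agree, so $w_1|_V = w_2|_V$, forcing $w_1 = w_2$ in $\frso(V_\bbC, q)$. Consequently $w_1$ and $w_2$ act identically on each $H^k(X, \bbC)$, so their eigenspace decompositions, i.e. the Hodge decompositions, coincide as direct sum decompositions: $H^{p,q}_{I_1} = H^{p,q}_{I_2}$ for all $p$, $q$, $k$.

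There is no real obstacle here. One could alternatively invoke Theorem \ref{thm_main}(1) with $\varphi = \mathrm{Id}_V$ (which lies in $\mathrm{Ker}(SN) \subset \JJ$ by Proposition \ref{prop_cond}(2)), but that would only produce an \emph{isomorphism} of graded Hodge algebras extending the identity on $H^2$, which is strictly weaker than the pointwise equality of Hodge decompositions on each $H^k$ asserted by the corollary.
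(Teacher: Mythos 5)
Your proof is correct and is essentially the paper's own argument: the paper proves the corollary by applying Theorem \ref{thm_main} with $\varphi = \mathrm{Id}$ and explicitly noting that one may take $\psi = \mathrm{Id}$, and unwinding the theorem's proof with these choices yields exactly your computation ($w_1, w_2 \in \frso(V_\bbR,q)$ agree on $V$, hence agree everywhere, so the Hodge decompositions coincide on each $H^k$). Your closing remark that the bare statement of the theorem would only give an abstract isomorphism rather than pointwise equality is precisely why the paper adds the clause about choosing $\psi = \mathrm{Id}$, so the two arguments match in both substance and this subtlety.
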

\begin{proof}
Apply the previous theorem to $\varphi = \mathrm{Id}$, and note that
in its proof we can choose $\psi = \mathrm{Id}$.
\end{proof}



\begin{thebibliography}{HKL}

\bibitem[Be]{Be}
A.\ Beauville, {\it Vari\'et\'es K\"ahl\'eriennes dont la premi\`ere classe de Chern est nulle}, J. Diff. Geom. 18, 755--782 (1983)

\bibitem[B1]{B1}
F.\ Bogomolov, {\it On the cohomology ring of a simple hyperk\"ahler manifold (on the results of Verbitsky)},
Geom. Funct. Anal 6(4), 612--618 (1996)

\bibitem[B2]{B2}
F.\ Bogomolov, {\it The decomposition of K\"ahler manifolds with a trivial canonical class},
Mat. Sb. (N.S.) 93(135) (1974), 573--575, 630

\bibitem[GHJ]{GHJ}
M.\ Gross, D.\ Huybrechts, D.\ Joyce, {\it Calabi-Yau manifolds and related geometries.}
In: Lectures at a Summer School in Nordfjordeid, Norway, June 2001, Universitext. Berlin, Springer (2003)

\bibitem[HS]{HS}
N.\ Hitchin, J.\ Sawon, {\it Curvature and characteristic numbers of hyper-K\"ahler manifolds},
Duke Math. J. 106 (2001), 599--615

\bibitem[H1]{H1}
D.\ Huybrechts, {\it Compact hyperk\"ahler manifolds: basic results}, Invent. Math. 135
(1999), 63--113. Erratum in: Invent. Math. 152 (2003), 209--212

\bibitem[H2]{H2}
D.\ Huybrechts, {\it Finiteness results for compact hyperk\"ahler manifolds},
J. Reine Angew. Math. 558 (2003), 15--22

\bibitem[H3]{H3}
D.\ Huybrechts, {\it A global Torelli theorem for hyperk\"ahler manifolds [after M. Verbitsky]},
Ast\'erisque (2012), no. 348, Exp. No. 1040, x, 375--403, S\'eminaire Bourbaki: Vol. 2010/2011. Expos\'es 1027--1042.

\bibitem[Kn]{Kn}
M.\ Kneser, {\it Quadratische {F}ormen},
Revised and edited in collaboration with Rudolf Scharlau, Springer-Verlag, Berlin, 2002, viii+164 pp.

\bibitem[KSV]{KSV}
N.\ Kurnosov, A.\ Soldatenkov, M.\ Verbitsky, {\it Kuga-Satake construction and cohomology of hyperk\"ahler manifolds},
Adv. Math. 351 (2019), 275--295

\bibitem[LL]{LL}
E.\ Looijenga, V.\ Lunts, {\it A Lie algebra attached to a projective variety}, Invent. Math. 129(2), 361--412 (1997)

\bibitem[Ma]{Ma}
E.\ Markman, {\it A survey of Torelli and monodromy results for holomorphic-symplectic varieties},
Complex and differential geometry, 257--322, Springer Proc. Math., 8, Springer, Heidelberg, 2011.

\bibitem[Na]{Na}
Y.\ Nagai, {\it Birational geometry of O'Grady's six dimensional example over the Donaldson--Uhlenbeck compactification},
Math. Ann. 358 (2014), 143--168

\bibitem[Ni]{Ni}
M.\ Nieper, {\it Hirzebruch-Riemann-Roch formulae on irreducible symplectic K\"ahler manifolds},
J. Algebraic Geom. 12 (2003), no. 4, 715--739

\bibitem[No]{No}
S.\ Novikov, {\it Topological invariance of rational classes of Pontrjagin},
Dokl. Akad. Nauk SSSR, 163 (1965), 298--300

\bibitem[V1]{V2}
M.\ Verbitsky, {\it Action of the Lie algebra of $SO(5)$ on the cohomology of a hyper-K\"ahler manifold},
Funct. Anal. Appl. 24 (1990), no. 3, 229--230

\bibitem[V2]{Ve}
M.\ Verbitsky, {\it Cohomology of compact hyperk\"ahler manifolds},
Ph.D. dissertation, Harvard University, Cambridge, Mass., 1995; arXiv:alg-geom/9501001

\bibitem[V3]{V3}
M.\ Verbitsky, {\it Mirror symmetry for hyper-K\"ahler manifolds},
Mirror symmetry, III (Montreal, PQ, 1995), 115--156,
AMS/IP Stud. Adv. Math., 10, Amer. Math. Soc., Providence, RI, 1999.

\bibitem[V4]{V4}
M.\ Verbitsky, {\it Mapping class group and a global Torelli theorem for hyperk\"ahler manifolds},
Duke Math. J. 162 (2013), no. 15, 2929--2986.

\end{thebibliography}
\end{document}